\title{Non-split linear sharply 2-transitive groups}
\author{Yair Glasner and Dennis D. Gulko}
\date{\today}
\subjclass[2010]{Primary 20B22}%
\keywords{sharply $2$-transitive, linear groups.}%
\newtheorem{theorem}{Theorem}[section]
\newtheorem{lemma}[theorem]{Lemma}
\newtheorem{prop}[theorem]{Proposition}
\newtheorem{remark}[theorem]{Remark}
\newtheorem{defi}[theorem]{Definition}
\newtheorem{cor}[theorem]{Corollary}
\newtheorem{question}[theorem]{Question}
\newcommand{\Qed}{\nobreak \ifvmode \relax \else
      \ifdim\lastskip<1.5em \hskip-\lastskip
      \hskip1.5em plus0em minus0.5em \fi \nobreak
      \vrule height0.75em width0.5em depth0.25em\fi}
\newcommand{\nn}{{\mathbf{3}}}
\newcommand{\Z}{{\mathbf{Z}}}
\newcommand{\R}{{\mathbf{R}}}
\newcommand{\C}{{\mathbf{C}}}
\newcommand{\Q}{{\mathbf{Q}}}
\renewcommand{\P}{{\mathbf{P}}}
\renewcommand{\Pr}{\pi}
\newcommand{\F}{{\mathbf{F}}}
\newcommand{\cU}{{\mathcal{U}}}
\newcommand{\arrow}{\rightarrow}
\newcommand{\embedding}{\hookrightarrow}
\newcommand{\sgn}{{\operatorname{sgn}}}
\newcommand{\SL}{{\operatorname{SL}}}
\newcommand{\trivgp}{\langle e \rangle}
\newcommand{\norm}[1]{\left\Vert#1\right\Vert}
\newcommand{\Span}{{\operatorname{Span}}}
\newcommand{\GL}{{\operatorname{GL}}}
\newcommand{\PGL}{{\operatorname{PGL}}}
\newcommand{\Inv}{{\operatorname{Inv}}}
\newcommand{\Char}{{\operatorname{Char}}}
\newcommand{\diag}{{\operatorname{Diag}}}
\newcommand{\bs}{\backslash}
\newcommand{\ord}{{\operatorname{Ord}}}
\newcommand{\pchar}{\operatorname{p-char}}
\def\go{\omega}
\def\bE{\begin{enumerate}}
\def\eE{\end{enumerate}}
\newcommand{\lsr}[1]{\left[#1\right]}
\newcommand{\lr}[1]{\left(#1\right)}
\newcommand{\LR}[1]{\left\{#1\right\}}
\newcommand{\bit}{\begin{itemize}}
\newcommand{\eit}{\end{itemize}}
\def\ker{\operatorname{ker}}
\begin{document}
\maketitle
\begin{abstract}
We give examples of countable linear groups $\Gamma < \SL_{\nn}(\R)$, with no nontrivial normal abelian subgroups, that admit a faithful sharply $2$-transitive action on a set. Without the linearity assumption, such groups were recently constructed by Rips, Segev, and Tent in \cite{RST:s2t}. Our examples are of permutational characteristic $2$, in the sense that involutions do not fix a point in the $2$-transitive action. \end{abstract}
\section{Introduction}
A sharply $2$-transitive group is, by definition, a permutation group $X \curvearrowleft \Gamma$ which acts transitively and freely on ordered pairs of distinct points. Such a group is called {\it{split}} if it admits a nontrivial normal abelian subgroup. In this case the normal abelian subgroup acts freely and transitively on $X$. The following question, that attracted the attention of algebraists for many years, was recently answered negatively, by Rips, Segev and Tent in \cite{RST:s2t}
\begin{question} \label{q:main}
Is every sharply $2$-transitive group split?
\end{question}
In Theorem \ref{thm:s2t} we show that the answer remains negative even in the setting of countable linear groups. This came to us as somewhat of a surprise because prior results of ours \cite{GlGu} and of Glauberman, Mann and Segev  \cite{GMS:s2t} ruled out the possibility of linear counterexamples, unless the permutational characteristic of the sharply $2$-transitive group is $2$ (see definition \ref{def:pchar}). 

Splitting implies a tame, algebraic, structure theory. With every split sharply $2$-transitive group $\Gamma$ one can associate a near field $N$. By definition a {\it{near field}} is just like a division ring, except that it is required to be distributive only from the right. The group $\Gamma$ then indeed {\it{splits}} as a semidirect product $\Gamma= N^* \ltimes N$ of the multiplicative and additive groups of the near field. Moreover the given sharply $2$-transitive action is the unique faithful primitive permutation representation of this group and it is equivalent to the natural action $N \curvearrowleft N^* \ltimes N$ by affine transformation $x^{(a,b)} = x\cdot a+b$. In this paper all permutation actions will be right actions, this also dictates our choice of right near fields. 

When $X \curvearrowleft \Gamma$ is sharply $2$-transitive there exists an element flipping any two points in $X$, whose square must be trivial. This gives rise to a large set of involutions $\Inv(\Gamma) \subset \Gamma$. Every such involution has to flip (many) pairs of points, and a pair of points determines the involution. Since $\Gamma$ is transitive on pairs of points all involutions are conjugate. Any nontrivial element, and in particular any involution, can have either $0$ or $1$ fixed points. If every involution does fix a point then the map $\Inv(\Gamma) \arrow X$ taking an involution to its fixed point is a $\Gamma$-invariant bijection. In this case it follows that the $\Gamma$-action on $\Inv(\Gamma)$ by conjugation is $2$-transitive. In particular, the order of the product of two distinct involutions is independent of the choice of the specific involutions (see \cite[Chapter 2]{Kerby:mst}, for more details). This gives rise to the following definition:
\begin{defi} \label{def:pchar}
Let $\Gamma$ be a sharply $2$-transitive permutation group on $X$. If the stabilizer of a point contains an involution let $p = \ord(\sigma \tau)$ be the order of the product of two distinct involutions. We define the {\it{permutational characteristic of $\Gamma$}} to be
$$\pchar(\Gamma)=\left\{ \begin{array}{ c l }
	2\hspace{8pt} & {\text{Involutions do not fix a point}}\\
	p \hspace{8pt} & p < \infty \\
	0\hspace{8pt} & p =\infty
\end{array}\right.$$
\end{defi}
\noindent It is not difficult to verify that $\pchar(\Gamma)$ is either $0$ or prime, and that it coincides with the characteristic of the near field whenever $\Gamma$ splits. We refer the readers to \cite{Kerby:mst}, \cite{GlGu} for more details. 

In two papers \cite{Z1:finite_s2t,Z2:finite_s2t} from 1936 H. Zassenhaus completed a full classification of  finite sharply $2$-transitive groups. He started by showing that every such group splits, and then gave a complete classification of finite near fields. Contrary to the situation for skew fields, this classification does involve nontrivial examples of finite near fields. In \cite{T1:cont_s2t,T2:cont_s2t} Tits proved that every  locally compact connected sharply $2$-transitive group splits. The near fields here are just $\R,\C$ or $\mathbf{H}$. In \cite{Kerby:mst}, \cite{Turk:s2t_char3} it was proved that every sharply $2$-transitive group $\Gamma$ with $\pchar(\Gamma)=3$ splits. In \cite{GlGu,GMS:s2t} it was shown that every linear sharply $2$-transitive group $\Gamma < \GL_n(k)$ with $\pchar(\Gamma) \ne 2$ splits. The first paper, by the current authors relied on an extra assumption that $\operatorname{char}(k) \ne 2$, this restriction was removed in the second paper, by Glauberman, Mann and Segev who also relaxed the linearity assumption. 

\begin{defi} \label{def:embedding_of_actions}
Let $A \bs G \stackrel{\phi}{\curvearrowleft} G, B \bs H \stackrel{\psi}{\curvearrowleft} H$ be transitive actions. We say that {\it{$\phi$ is embedded in $\psi$}} if $G \leq H$ is a subgroup and $A = B \cap G$. Equivalently, the map $A \bs G \arrow B \bs H$ given by $Ag \mapsto Bg$ is well defined, injective and $G$-equivariant. 
\end{defi}

Recently the first examples of nonsplit sharply $2$-transitive groups were given by Rips-Segev-Tent in \cite{RST:s2t}. In fact they show that any transitive group action $A \bs G \curvearrowleft G$ can be embedded into a nonsplit sharply $2$-transitive action $B \bs H \curvearrowleft H$ of permutational characteristic $2$; unless one of the following obvious obstructions hold (i) there exists a nontrivial group element $g \in G$ fixing two distinct points, (ii) there exists an involution $\sigma \in G$ fixing a point in $X$. There is a third obstruction mentioned in \cite{RST:s2t}, that the action $A \bs G \curvearrowleft G$, is not already sharply $2$-transitive but this additional assumption is not really necessary. Indeed if this action happens to be sharply $2$-transitive we can first replace the pair of groups $A < G$ by the pair $A < G_1 = G*Z$. The action $A \bs G_1 \curvearrowleft G_1$ is clearly not even primitive, but it satisfies all the other conditions of the theorem and we can proceed from here.

Our main theorem comes to show that such nonsplit sharply $2$-transitive groups of permutational characteristic $2$ can be constructed even within the realm of linear groups.
\begin{theorem} \label{thm:s2t} (Linear s-2-t groups)
Let $n \ge \nn$ and $L < \SL_n(\R)$ be a countable group that contains neither involutions nor nontrivial scalar matrices. Assume that $A \bs L \curvearrowleft L$ is a transitive action, such that the only element fixing more than a single point is the identity. Then there exists a larger countable group $L < H < \SL_n(\R)$ and an embedding of the given action into a nonsplit sharply $2$-transitive action $B \bs H \curvearrowleft H$. 

Moreover the group $H$ admits uncountably many nonequivalent faithful primitive permutation representations. 
\end{theorem}

The last statement of the theorem should be contrasted with the well known fact (see for example \cite[Appendix A]{GG:Primitive}) that a split sharply $2$-transitive group admits a unique (up to equivalence of  permutations representations) faithful primitive permutation representation. Thus the nonsplit examples of \cite{RST:s2t} are the first natural candidates for sharply $2$-transitive groups that admit multiple faithful primitive actions. In order to actually construct such examples linearity comes in handy. We appeal to the results of \cite{GG:AOS} which ensure that any Zariski dense countable subgroup of $\SL_n(\R)$ with trivial centre admits uncountably many nonequivalent faithful primitive actions. But linearity is not essential here. Using the methods of \cite{RST:s2t}, it is possible to construct groups that admit many faithful nonequivalent sharply $2$-transitive actions! This will be shown in a subsequent paper. 

In a previous version of this paper we were able to embed nonsplit sharply two transitive groups inside $SL_n(\R)$ only for $n \ge 6$. By the encouragement of the referee we improved the proof to allow for any $n \ge 3$. In a sense, made explicit by the following theorem, this is the best possible value. 
\begin{theorem} \label{thm:n2}
Let $k$ be a field and $\Gamma < \SL_2(k)$ a sharply $2$-transitive group. Then $\Gamma$ splits. 
\end{theorem}
\begin{proof}
We may assume that $\Char(k) = 2$, otherwise $\SL_2(k)$ does not contain sharply 2-transitive groups as it has only one involution. Let $t \in \Gamma$ be an involution. Replacing if necessary $k$ by its algebraic closure and conjugating we may assume that $t = \begin{pmatrix} 1 & 1 \\ 0 & 1 \end{pmatrix}$ is already in Jordan form. Thus $Z_{\Gamma}(t) < Z_{\SL_2(k)}(t) = \left\{ \left. \begin{pmatrix} 1 & \alpha \\ 0 & 1 \end{pmatrix} \right | \alpha \in k \right \}$ is contained in $\Inv^{*}(\Gamma) := \Inv(\Gamma) \cup \{id\}$. Since in a sharply 2-transitive group both $Z_{\Gamma}(t)$ and $\Inv^{*}(\Gamma)$ are simply transitive, we deduce that these two sets must be equal. So $\Inv^{*}(\Gamma)$ is an abelian normal subgroup of $\Gamma$ and the group splits.  
\end{proof}
The way this proof rules out fields of characteristic different from two is somewhat of a cheat. It would be more interesting to prove a similar statement for subgroups of groups such as $\GL_2(k)$ or $\PGL_2(k)$ that contain nontrivial involutions.

The proof of Theorem (\ref{thm:s2t}) appeals to the construction made in \cite{RST:s2t}. We show that under the conditions stated in the theorem, the same construction can be realized within the ambient group $\SL_n(\R)$. For this we rely on the structure (see Section \ref{sec:gen}) of $\SL_n(\R)$ as a locally compact, second countable, topological group. We use the dynamics of the action $\P^{n-1} \curvearrowleft \SL_n(\R)$ on the projective space. We also apply the Baire category theorem to the group $\SL_n(\R)$ itself and to some of its closed subgroups. Though we didn't verify all the details we are convinced that our method would still work upon replacing $\R$ by other local fields, namely finite extensions of $\Q_p$ and $\F_p((t))$\footnote{When the characteristic of the local field is $2$, (i.e. for $\F_{q}((t))$ with $q = 2^m$) this is significantly more difficult.}. The following question the question of smaller fields or rings where the above mentioned topological methods are no longer available. 
\begin{question}
Does the group $\SL_n(R)$ contain nonsplit sharply $2$-transitive subgroups for smaller rings such as the field of algebraic numbers $R=\overline{\Q}$, the rationals $R = \Q$ or even the integers $R = \Z$? 
\end{question}

We are grateful to the anonymous referee for numerous comments and in particular for providing Lemma \ref{lem:diagonal} and the concluding arguments of Theorem \ref{thm:s2t}. We find that they are more elegant than the original ones we had. The research of both authors was partially funded by Israel Science Foundation grant ISF 2095/15. 

\section{Proof}
\subsection{The main induction}
Given an involution $t \in \SL_n(\R)$ let $\R^n = W^{+}(t) \oplus W^{-}(t)$ denote the decomposition of $\R^n$ into its $\pm 1$ eigenspaces and $\Pr_t = \frac{1 - t}{2} :\R^n \arrow W^{-}(t)$ the projection on $W^{-}(t)$ along $W^{+}(t)$. When possible, we will omit $t$ from the notation writing $W^{\pm}$ instead of $W^{\pm}(t)$ and $\pi$ instead of $\pi_t$. Recall the following definition:
\begin{defi} 
 A subgroup $A\leq G$ is called \rm{malnormal} if for all $g\in G, g \not \in A$ we have $A^g\cap A=\trivgp$
\end{defi}
\noindent Our main technical theorem is the following linear version of \cite[Theorem 1.1]{RST:s2t}. Note that it follows from the assumptions of the theorem that $n \ge 3$. 
\begin{theorem} \label{thm:tec}
Let $G < \SL_n(\R)$ be a countable group, $-1 \ne t \in G$ an involution such that $\dim(W^{-}(t)) \ge 2$ and $A < G$ a malnormal subgroup containing no involutions. Assume further that: 
\begin{enumerate}
\item \label{itm:conj} all the involutions in $G$ are conjugate (in $G$).
\item \label{itm:es} If $W^{-}(t)$ is contained in an eigenspace of $g \Pr_t$ for some $g \in G$ then either $g =1$ or $g =t$. 
\end{enumerate}
Then for any element $v \in G, v \not \in A$ there exists a countable extension $G \le G_1 < \SL_n(\R)$ with a malnormal subgroup $A_1 < G_1$ containing no involutions, such that $A_1 \cap G = A$ and an element $f \in A_1$ such that $A_1 t f = A_1 v$. Moreover $G_1,A_1$ still satisfy the conditions (\ref{itm:conj}), (\ref{itm:es}) above.
\end{theorem}
The improvement of this theorem over its counterpart, Theorem 1.1 in \cite{RST:s2t} is the linearity: when $G$ is linear, its extension $G_1$ is also linear and within the same ambient matrix group $\SL_n(\R)$. The eventual existence of a nonsplit sharply 2-transitive extension $G < H < \SL_n(\R)$ gives rise to two unavoidable restrictions which were not necessary in \cite{RST:s2t}. Firstly all the involutions in $G$  are eventually conjugate in $H$, therefore they must be represented by conjugate matrices in $\SL_n(\R)$. Secondly $G$ cannot contain nontrivial scalar matrices as these will give rise to a nontrivial centre for $H$ which is absurd. These restrictions have to do with the way $G$ and its extensions are embedded in $\SL_n(\R)$. They cannot be stated in purely group theoretic terms, which is the reason why they appear here and not in \cite{RST:s2t}. The conditions we impose (\ref{itm:conj}) and (\ref{itm:es}) are somewhat stronger than these obvious restrictions but they are of the same spirit. 

It is important that the resulting group action $A_1 \bs G_1 \curvearrowleft G_1$ is again subject to the auxiliary conditions (\ref{itm:conj}), (\ref{itm:es}), and that $G_1$ it is contained in the same ambient matrix group. This enables us to use Theorem \ref{thm:tec} inductively - increasing the transitivity level with each application of the theorem.
\begin{cor} \label{cor:tec} 
Under the assumptions of Theorem \ref{thm:tec}, there exists a countable extension $G < H < \SL_n(\R)$ and a subgroup $B < H$ with the following properties:
\begin{itemize}
\item $B \cap G = A$,
\item $B$ is malnormal in $H$,
\item for every $v \in H, v \not \in B$ there exists an element $f \in B$ such that $Bv=Btf$,
\item $B$ does not contain involutions.
\end{itemize} 
In permutation theoretic terminology these items mean that the action $B \bs H \curvearrowleft H$, extends the given action, that it is free and transitive on ordered pairs of points, and that the resulting sharply $2$-transitive group is of permutational characteristic $2$. 
\end{cor}
\begin{proof}
Let us enumerate the elements of the group $G$ that are not in $A$ as follows $G \smallsetminus A = \{v_1,v_2, \ldots\}$. Applying Theorem \ref{thm:tec} inductively we obtain a sequence of extensions $(A,G) \leq (A_1,G_1) \leq (A_2,G_2) \leq \ldots$, such that $G_i \cap A_{i+1} = A_i$. These come together with elements $f_i \in A_i$ such that $A_i t f_i = A_i v_i$. Let $G_{\omega} = \cup_{i=1}^{\infty} G_i$ and $A_{\omega} = \cup_{i=1}^{\infty} A_i$. Note that $G_i \cap A_{\omega} = A_i, \ \forall i$. Otherwise for any $g \in G_i \cap A_{\omega}, g \not \in A_i$, after replacing $i$ by the maximal index for which this still holds, we obtain a contradiction to $G_{i} \cap A_{i+1}=A_{i}$. In particular $G \cap A_{\omega} = A$. By construction every $v \in G \smallsetminus A$ there exists $f \in A_{\omega}$ such that $A_{\omega}tf=A_{\omega}v$. We claim that the pair of groups $A_{\omega} < G_{\omega }$ satisfy again the conditions of Theorem  \ref{thm:tec}, with respect to the same involution $t$. Indeed any violation of these conditions can be expressed using finitely many elements, and hence is witnessed by $A_n < G_n$ for some finite $n$. For example assume $A_{\omega} < G_{\omega}$ fails to be malnormal then $e \ne a = ga'g^{-1} \in A \cap A^g$ for some $a,a' \in A_{\omega}, g \in G_{\omega}, \ g \not \in  A_{\omega}$. Now for some finite $n$ we have $a, a' \in A_n, g \in G_n, \ g \not \in A_n$. Since we already saw hat $g \not \in A_n$ we have a contradiction to the malnormality of $A_n$ in $G_n$, that was guaranteed by successive applications of Theorem \ref{thm:tec}. A similar argument establishes conditions (\ref{itm:conj}) and (\ref{itm:es}).

Applying the procedure of the previous paragraph to $(A_{\omega},G_{\omega})$ and continuing inductively we obtain a sequence $(A_{\omega}, G_{\omega}) \leq (A_{2\omega},G_{2\omega}), \leq \ldots$ The pair of groups $H = G_{\omega \cdot \omega} = \cup_{i=1}^{\infty} G_{i \omega}$, $B = A_{\omega \cdot \omega} = \cup_{i =1}^{\infty}A_{i\omega}$ satisfy all the desired properties. 
\end{proof}

\subsection{Generic constructions} \label{sec:gen}
A Baire space is a topological space satisfying the conclusion of Baire's category theorem. Namely a space in which a countable intersection of dense open sets is still dense. Examples include locally compact Hausdorff spaces as well as Polish spaces, i.e. separable spaces admitting a complete metric which is compatible with the topology. A subset in a Baire space is called {\it{residual}} or {\it{generic}} if it contains the intersection of countably many dense open sets. Residual sets in Baire spaces play the role of {\it{large sets}}, quite similar to the role played by sets of full measure in the measure theoretic setting. See for example \cite{ox:mc},\cite[Chapter 2.1]{Rudin:FA}. When we say that a property $P$ {\it{holds generically}} in a Baire space $X$ we mean that the set of all elements of $X$ satisfying $P$ is residual. The complement of a residual set is called a {\it{meagre set}}.

In this paper we will work with a few Baire spaces: The groups $\SL_n(\R)$ and $\SL_k(\R) \times \SL_{n-k}(\R)$, the projective space $\P= (\R^n\setminus\LR{0}) / \R^{*}$ and various products of these spaces. All of these admit a standard topology, coming for example from their structure as analytic manifolds. This topology is locally compact, metrizable and hence clearly Baire. In addition all these spaces are also real points of smooth, irreducible algebraic varieties (see for example \cite[section 2.1.4]{PR:Book} for the algebraic groups, \cite[Section I.2]{Hart:Book} for the projective plane). The interplay between the Hausdorff and the Zariski topology, gives rise to a large family of open dense sets. 
\begin{lemma} \label{lem:ZtHt} Every non-empty Zariski open subset $\emptyset \ne U < Z$ in  one of the above spaces is dense and open in the Hausdorff topology on $Z$. A countable intersection of such Zariski open sets is residual and in particular nonempty. 
\end{lemma}
\begin{proof} $U$ is automatically open in the finer Hausdorff topology. Since $Z$ is irreducible its complement is a subvariety of lower dimension which cannot contain an open set. See \cite[Section 3.1, Lemma 3.2]{PR:Book} for details. The last statement follows directly from Baire's theorem. \end{proof} 
\subsection{Projective dynamics}
All our notation is taken from \cite[Section 3]{BG:Dense_Free} and we refer the readers to that paper for more details. 

We use extensively the action $\P \curvearrowleft \SL_n(\R)$ where $\P= (\R^n\setminus\LR{0}) / \R^{*}$ is the projective space. If $0 \ne v \in \R^n, \langle 0 \rangle \ne W < \R^n$ are a nontrivial vector and subspace in $\R^n$ we denote the corresponding projective point, and subspace by $[v] \in \P, [W] < \P$. Fix a norm $\norm{\cdot}$ on $\R^n$, this gives rise to a norm on the exterior product $\R^n \wedge \R^n$ which is used  in turn to endow the projective space $\P$ with the metric 
$$d([v],[w]) = \frac{\norm{v \wedge w}}{\norm{v} \cdot \norm{w}}.$$
See the above reference for more details on this metric. We will denote the $\epsilon$ neighborhood of a set $\Omega$ in this metric by
$(\Omega)_{\epsilon} = \LR{x \in \P \ | \ d(x,\Omega) < \epsilon}$.

Any nontrivial matrix $0 \ne M \in M_n(\R)$ gives rise to a partially defined map $[M]: \P \smallsetminus [\ker(M)] \arrow \P$. A projective point $[v] \in \P$ is moved by $[M]$ if and only if $v$ is not an eigenvector of $M$. Indeed $[M]$ is not defined on $[v]$ if and only if $v$ is an eigenvector with eigenvalue $0$ and $[v]$ is a fixed point of $[M]$ if and only if $v$ is an eigenvector with a nonzero eigenvalue. With this terminology for example, condition (\ref{itm:es}) in Theorem \ref{thm:tec} requires that whenever $g \in G, \ g \not \in \{1,t\}$ then the map $[g \Pr_t]  = \lsr{\frac{g - gt}{2}}$ should move at least one projective point in $[W^{-}(t)]$. If $M \in \GL_n(\R)$ then the map $[M]$ is everywhere defined and gives rise to a homeomorphism of the whole projective space. 

\begin{lemma} \label{lem:general_pos}
Let $B \in \GL_r(\R)$. If $v_0,v_1,\ldots, v_r \in \R^r$ are $r+1$ vectors in general position that are all eigenvectors of $B$. then $B$ is a scalar matrix. 
\end{lemma}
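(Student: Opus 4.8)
The plan is to use the standard fact that an $r+1$ element subset of $\R^r$ in general position has the property that any $r$ of the vectors form a basis, and that the remaining vector is a linear combination in which every coefficient is nonzero. Concretely, I would first set up notation: write $v_0, v_1, \ldots, v_r$ for the given vectors, and let $\tilde B \in \GL_r(\R)$ be any lift of $B$ to $\GL_r(\R)$ (this exists since $B \in \PGL_r(\R)$; note that being an eigenvector is a projective notion, so each $v_i$ is genuinely an eigenvector of $\tilde B$, say $\tilde B v_i = \lambda_i v_i$ with $\lambda_i \ne 0$). Since $v_1, \ldots, v_r$ are in general position they form a basis, so we may write $v_0 = \sum_{i=1}^r c_i v_i$, and general position forces every $c_i \ne 0$ (if some $c_i = 0$ then $v_0$ lies in the span of the remaining $r-1$ of the $v_j$, contradicting general position).

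Next I would apply $\tilde B$ to this relation. On one hand $\tilde B v_0 = \lambda_0 v_0 = \lambda_0 \sum_{i=1}^r c_i v_i$; on the other hand $\tilde B v_0 = \sum_{i=1}^r c_i \tilde B v_i = \sum_{i=1}^r c_i \lambda_i v_i$. Since $v_1, \ldots, v_r$ are linearly independent, we may compare coefficients term by term, obtaining $c_i \lambda_0 = c_i \lambda_i$ for each $i$. Because each $c_i \ne 0$, this gives $\lambda_i = \lambda_0$ for all $i = 1, \ldots, r$. Hence $\tilde B$ acts as the scalar $\lambda_0$ on each vector of the basis $v_1, \ldots, v_r$, so $\tilde B = \lambda_0 I$ is a scalar matrix, and therefore $B$ is trivial (the identity) in $\PGL_r(\R)$ — which is what "$B$ is a scalar matrix" means at the projective level.

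I do not expect any serious obstacle here; the only point requiring a little care is the passage between $\PGL_r(\R)$ and $\GL_r(\R)$, i.e. making sure that "eigenvector" and "scalar matrix" are interpreted consistently at the projective level, and the elementary observation that general position of $r+1$ vectors in $\R^r$ forces all expansion coefficients of any one of them in terms of the others to be nonzero. Both are routine, so the proof is essentially the three-line coefficient comparison above.
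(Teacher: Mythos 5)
Your proof is correct, but it follows a different route from the paper's. You expand $v_0$ in the basis $v_1,\ldots,v_r$ (general position makes these a basis and forces every coefficient $c_i$ to be nonzero, since a vanishing coefficient would put $v_0$ in the span of $r-1$ of the others), then apply a lift $\tilde B$ and compare coefficients to conclude that all eigenvalues coincide, so $\tilde B$ is scalar on a basis. The paper instead argues by pigeonhole on eigenspaces: distributing the $r+1$ eigenvectors among the eigenspaces of $B$, some eigenspace of dimension $l$ must contain at least $l+1$ of them, and general position then forces $l=r$, giving $r$ independent eigenvectors for a single eigenvalue. Both arguments are short and elementary; yours isolates the quantitative fact that all expansion coefficients are nonzero and yields the equality of all eigenvalues directly, while the paper's counting argument avoids choosing a basis or a lift altogether. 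One small remark: your appeal to invertibility of $\tilde B$ (to get $\lambda_i\neq 0$) is not actually needed -- the coefficient comparison $c_i\lambda_0=c_i\lambda_i$ works regardless -- which is a mild bonus, since in the paper the lemma is invoked for maps of the form $\Pr_t\circ g$ restricted to $W^{+}$, which need not be invertible. Your care with the $\PGL$ versus $\GL$ bookkeeping (eigenvectors and scalars being well defined only up to the lift) is appropriate and consistent with how the paper uses the statement.
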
 
\begin{proof}
By definition, $r+1$ vectors in $\R^r$ are in {\it{general position}} if any $i$ of them span an $i$-dimensional subspace as long as $i \le r$. By counting considerations there must be at least one eigenspace $V< \R^r$ of dimension $l$ containing at least $l+1$ of the eigenvectors $\{v_0,v_1,\ldots, v_r\}$. Since the vectors are in general position necessarily $l=r$; so $B$ has an eigenvalue with $r$-linearly independent eigenvectors and is hence a scalar. 
\end{proof}

\subsection{Linearity of free products}
The following proposition asserts that the free product of two countable linear groups $G,H < \SL_n(\R)$ is still linear inside the same ambient matrix group. Similar linearity statements for free products are well known, see for example \cite{shalen}. Shalen's statement is stronger than ours as it is not restricted to countable groups. Our statement is tailored for its use in the main induction. In particular, in anticipation of (\ref{itm:es}) of Theorem \ref{thm:tec}, we require the embedding $G*H \hookrightarrow \SL_n(\R)$ to be injective {\it{relative to some fixed linear projection $\pi$}}. A requirement made explicit in the statement of the theorem. We provide a full proof. This also serves to demonstrate our method, that will  appear again in the proof of Theorem \ref{thm:HNN} on linearity of HNN extensions.  
\begin{prop} \label{prop:free_prod} Let $G,H < \SL_n(\R)$ be two countable groups that contain no nontrivial scalar matrices.  Let $\Pr: \R^n \arrow W$ be a linear projection with $2 \le \dim(W) < n$. For $f \in \SL_n(\R)$ let $\Phi_f:G*H \arrow \SL_n(\R)$ be defined by $\Phi_f(g)=g, \forall g \in G$ and $\Phi_f(h) = fhf^{-1}, \ \forall h \in H$. 

Then for a Baire generic choice of $f \in \SL_n(\R)$ the map $\Phi_f$ is injective and its image contains no nontrivial scalar matrices. Moreover whenever $\go \in G*H$ and $\go \not \in G$ then $W$ is not contained in an eigenspace of $\Phi_f(\go) \Pr$. 
\end{prop}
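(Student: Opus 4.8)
The plan is to realize each of the three conditions in the conclusion as the generic behaviour of a suitable Baire-category argument on the Polish group $\SL_n(\R)$, and then intersect countably many comeager sets. Since $G*H$ is countable and $\SL_n(\R)$ is a Baire space, it suffices to show that for each nontrivial $w \in G*H$ the set of $f$ for which $\Phi_f(w) \ne 1$ is comeager, that for each such $w$ the set of $f$ for which $\Phi_f(w)$ is not a nontrivial scalar is comeager, and that for each $\go \in (G*H)\setminus G$ the set of $f$ with $W \not\subseteq \ker(\l - \Pr\circ\Phi_f(\go))$ for every $\l$ is comeager. In each case the relevant ``bad'' set is closed (it is cut out by polynomial, hence continuous, conditions on the entries of $f$), so it is enough to show the bad set has empty interior, i.e.\ that for any nonempty open $U \subseteq \SL_n(\R)$ there is some $f \in U$ avoiding the bad condition. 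Equivalently one shows the bad set is a proper Zariski-closed subset of $\SL_n(\R)$: once we exhibit a single real point of $\SL_n(\R)$ off it, density of the complement in the Euclidean topology follows from irreducibility of $\SL_n$ over $\R$ together with the existence of real points on its smooth complement.

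For the first two conditions I would argue as follows. Write $w = g_1 h_1 g_2 h_2 \cdots$ in normal form, with $g_i \in G\setminus\{1\}$, $h_i \in H\setminus\{1\}$ (up to the usual boundary cases), so that $\Phi_f(w) = g_1 (f h_1 f^{-1}) g_2 (f h_2 f^{-1}) \cdots$ is a word in $f$, $f^{-1}$ and fixed nonscalar matrices. The key observation is that $f \mapsto \Phi_f(w)$ is a morphism of real varieties from $\SL_n$ into $M_n$, so the equation $\Phi_f(w) = 1$ (and likewise $\Phi_f(w) = \l I$ after introducing $\l$ as an extra parameter, or ranging over the finitely-many scalar possibilities determined by $\det = 1$) defines a Zariski-closed subset $Z_w \subseteq \SL_n(\R)$; I must produce one $f$ outside it. Here the classical ping-pong / table-tennis construction does the job: choosing $f$ of the form ``strong north-south dynamics with attracting/repelling data in general position relative to the fixed points of the finitely many matrices involved'' forces $\Phi_f(w)$ to be a strongly dominant (hence non-identity, non-scalar) matrix. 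This is exactly the mechanism of \cite{BG:Dense_Free}, and the hypothesis $n \ge 3$ plus absence of scalars in $G,H$ is what makes room for such an $f$; I would simply cite the relevant ping-pong lemma from \cite[Section 3]{BG:Dense_Free} rather than reprove it.

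The third condition is where the projection hypothesis $\dim(\Im\Pr) - \dim(\ker\Pr) \ge 2$ enters, and I expect this to be the main obstacle. Fix $\go \in (G*H)\setminus G$; then in normal form $\go$ genuinely involves at least one letter from $H$, so $\Phi_f(\go)$ depends nontrivially on $f$, and again $f \mapsto \Pr\circ\Phi_f(\go)$ is a morphism of varieties. The bad set is $\{f : \exists \l,\ (\Pr\circ\Phi_f(\go) - \l)|_W = 0\}$; eliminating $\l$ this is a Zariski-closed condition on $f$ (the restriction of $\Pr\circ\Phi_f(\go)$ to $W$, viewed as a map $W \to \R^n$, must have rank $\le 1$ after subtracting a scalar — concretely, all $2\times 2$ minors of a certain matrix built from a basis of $W$ must vanish). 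So once more it suffices to find a single $f$ for which $W$ is not inside an eigenspace of $\Pr\circ\Phi_f(\go)$. The dimension inequality is what guarantees this is possible: if one arranges $f$ so that $\Phi_f(\go)$ is strongly dominant with attracting point $[v^+]$ and repelling hyperplane $H^-$ chosen generically, then $\Phi_f(\go)$ maps $W$ to a subspace that is ``close to'' the line $\R v^+$ in a controlled way but is genuinely two-dimensional on the relevant piece because $\dim W \ge \dim\ker\Pr + 2$ forces $\Pr$ to remain injective on a $\ge 2$-dimensional chunk of $\Phi_f(\go)(W)$; Lemma \ref{lem:general_pos} then rules out $W$ sitting in a single eigenspace, since that would force too many independent eigenvectors in general position and hence a scalar, contradicting $\go \notin G$ (or rather contradicting the dominance we built in). I would spell this out by choosing a basis $v_0,\dots$ of an appropriate subspace in general position with respect to the dynamical data of $f$, applying the contraction estimates of \cite{BG:Dense_Free} to see their images under $\Pr\circ\Phi_f(\go)$ remain in general position, and invoking Lemma \ref{lem:general_pos} (applied inside $\PGL_r$ with $r = \dim W$, or rather to the induced map on $W$) to conclude. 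The delicate point — and the real content beyond \cite{BG:Dense_Free} — is tracking how $\Pr$ interacts with the contracted image: this is precisely where $\dim(\Im\Pr) \ge \dim(\ker\Pr)+2$ cannot be weakened, and getting the estimate uniform enough to land a genuine open set of good $f$ is the step I would budget the most care for.
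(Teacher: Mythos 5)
Your overall skeleton matches the paper's: the bad sets are Zariski closed (your elimination of $\l$ via vanishing of $2\times 2$ minors is a fine substitute for the paper's use of Lemma \ref{lem:general_pos}, which writes the bad set as $\{\Pr\Phi_f(\go)w_i\wedge w_i=0\}$ for $r+1$ vectors in general position), so everything reduces to exhibiting a single good $f$ for each word, and Baire category finishes. You could also have dispensed with your first two comeager conditions: if $\go\notin G$ and $\Phi_f(\go)$ were $1$ or a scalar $\l\ne 1$, then $\Pr\circ\Phi_f(\go)$ would equal $\Pr$ or $\l\Pr$, which is scalar on $W=\Im(\Pr)$, while words lying in $G$ are handled by the hypothesis that $G$ contains no nontrivial scalars; so the eigenspace condition implies injectivity and absence of scalars, which is why the paper only treats $\go\notin G$. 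The genuine gap is in the one step that carries all the content, and which you yourself flag as the delicate one: producing a single $f$ with $W$ not contained in an eigenspace of $\Pr\circ\Phi_f(\go)$. The mechanism you propose does not work: a scalar action on $W$ sends vectors in general position to vectors in general position, so ``the contracted images stay in general position plus Lemma \ref{lem:general_pos}'' cannot yield a contradiction with scalarity, and ``contradicting $\go\notin G$'' is circular, since ruling out scalarity for $\go\notin G$ is exactly what must be proved. Your reading of the hypothesis $\dim\Im(\Pr)-\dim\ker(\Pr)\ge 2$ (that $\Pr$ stays injective on a two-dimensional piece of $\Phi_f(\go)(W)$) is likewise not what makes the argument go.

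What is actually needed, and what the paper does, is to exhibit one projective point of $[W]$ that is moved. Write $\go=h_1g_1\cdots h_kg_k$ in reduced form, choose $v_1$ not an eigenvector of any $g_i$ ($i<k$), $v_2$ not an eigenvector of any $h_i$ ($i>1$), complete to a basis so that the hyperplanes $H^{\pm}$ avoid the finitely many points $g_i(a^{+})$, $h_i(a^{-})$ and so that $g_k([W])\not\subset H^{+}$, and let $f=f(L)$ be diagonal with $v_1\mapsto Lv_1$, $v_2\mapsto L^{-1}v_2$. A ping-pong induction then shows that for a point $x\in[W]$ with $g_k(x)\notin H^{-}$, the images $\Pr\Phi_{f(L)}(\go)(x)$ converge, as $L\to\infty$, to the \emph{fixed} point $\Pr(a^{+})$ (or $\Pr g_1(a^{+})$ when $h_1=1$), independent of $x$. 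The punch line is to choose $x\in[W]$ different from this limit point while still satisfying $g_k(x)\notin H^{-}$, and it is precisely here that $\dim W\ge\dim\ker(\Pr)+2$ is used: it guarantees $[W]$ is not contained in $[\Pr^{-1}(\Span\{v_1\})]$ or $[\Pr^{-1}(\Span\{g_1v_1\})]$, so such an $x$ exists. Then for large $L$ the point $x$ is not fixed by $\Pr\Phi_{f(L)}(\go)$, hence $W$ lies in no eigenspace, and the good set $\cU(\go)$ is nonempty. Without this explicit tracking of a moved point and the attendant bookkeeping of exceptional sets, your sketch does not establish nonemptiness of the good sets, which is the only nontrivial part of the proposition.
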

\begin{proof}
We define
\begin{eqnarray*}
\cU & = &  \bigcap_{\go \in (G*H), \ \go \not \in G} \cU(\go), \qquad {\text{where}} \\ 
\cU(\go) & = & \LR{f\in\SL_n(\R): \Phi_f(\go) \Pr {\text{ is not scalar on }} W}.
\end{eqnarray*}
If we show that, for every $\go \in G*H, \go \not  \in G$, that  $\cU(\go)$ is non empty Zariski open Lemma \ref{lem:ZtHt} will show that $\cU$ is residual and all the claims of the theorem will follow. 

We shall hence fix such an element $\go$ and write it as a reduced word $\go = g_1 h_1 \cdot...\cdot g_k h_k $ with $g_i, \in G, h_i \in H$, all of them nontrivial except possibly $g_1$ and $h_k$. Since $\go \not \in G$ we exclude the possibility $\go = g_1$. Assume that $\dim(W)=r$ and $\{w_0,w_1,\ldots, w_r \}$ are $r+1$ vectors in general position in $W$. By Lemma \ref{lem:general_pos} the complement of $\cU(\go)$ is characterized by the equations $\{w_i^{\Phi_f (\go) \Pr } \wedge  w_i = 0\}_{0 \le i \le r}$, so $\cU(\go)$ is Zariski open in $\SL_n(R)$. It remains to exhibit $f \in \SL_n(\R)$ such that $W$ is not contained in an eigenspace of $\Phi_f(\go)\Pr$, thus proving that $\cU(\go)$ is nonempty. 

We now collect the data that will be used to define the desired element $f$. Choose a basis $\{v_1,v_2,\ldots, v_n\}$ for $\R^n$, a projective point  $x \in [W]$ and a number $L \gg 0$. In the projective space $\P= (\R^n\setminus\LR{0}) / \R^{*}$ let $a^{+} = [v_1]$, $a^{-} = [v_2]$, $H^{+} = [\Span \{v_2,v_3,v_4,\ldots, v_n\}]$ and $H^{-} = [\Span\{v_1,v_3,v_4,\ldots, v_n\}]$ be two projective points and two projective hyperplanes corresponding to these vectors.  We use all of this data to define $f = f(L) \in \SL_n(\R)$ by specifying its action on this basis:
$$v_1^f = L v_1, \qquad v_2^f = \frac{1}{L} v_2, \qquad v_i^f = v_i , \ \ \ 3 \le i \le n.$$
The dynamics of $f$ on $\P$ is proximal in the sense that 
\begin{equation} \label{eqn:prx}
\lim_{L \arrow \infty} y^{f(L)^{\pm 1}} = a^{\pm}, \forall y \not \in H^{\pm}, \qquad {\text{respectively}}.
\end{equation}

Note that
$$x^{\Phi_f(\go) \Pr} = x^{g_1 f h_1 f^{-1} g_2 f h_2 f^{-1} \ldots g_k f h_k f^{-1} \Pr}$$
Consider the points $x_1 = x^{g_1 f}$, $y_1 = x_1^{h_1 f^{-1}}$, $x_2 = y_1^{g_2 f}$, up to  $y_k= x_{k}^{h_k f^{-1}} = x^{\Phi_f(\go)}$. We will also write $x_i(L),y_i(L)$ to emphasize the dependence of these points on $L$. Our strategy is to choose all of the above data in such a way that these points alternate, $x_i$ coming arbitrarily close to $a^{+}$ and $y_i$ arbitrarily close to $a^{-}$, when $L \arrow \infty$. If we insist also that $x \ne (a^{-})^{\Pr}$ (or that $x \ne (a^{-})^{g_k \Pr}$ when $h_k = 1$) it will follow that when $L$ is large enough $\Phi_f(\go) \Pr$ does not fix the projective point $x \in [W]$ and the claim will follow. To carry this out we require the following:
\begin{enumerate}[(a)]
\item \label{itm:c1}  $x^{g_1} \not \in H^{+}$.
\item  \label{itm:c2} $(a^{-})^{g_i} \not \in H^{+}, {\text{ for }} 1< i \le k$.
\item \label{itm:c3} $(a^{+})^{h_i} \not \in H^{-}, {\text{ for }} 1 \le i < k.$ and also for $i = k$ if $h_k \ne 1$. 
\item \label{itm:c4} $x \ne (a^{-})^{\Pr}$ if $h_k \ne 1$, or $x \ne (a^{-})^{g_k \Pr}$ when $h_k = 1$.
\end{enumerate} 
It is an exercise in linear algebra to prove the existence of a collection of vectors satisfying all these conditions. Since we are already in the Baire category business, let us argue that it is enough to verify the satisfiability of each condition separately. What we are looking for is an ordered basis $\{v_1,\ldots, v_n\}$ for $\R^n$ together with projective point $x \in [W]$. We take the variety $C=\SL_n(\R) \times [W]$ as our parameter space, where the ordered basis is given by the columns of the matrix. Since we restricted to matrices in $\SL_n(\R)$, $C$ is an irreducible variety as a product of a connected algebraic group and a projective space. Clearly the conditions (\ref{itm:c1})-(\ref{itm:c4}) define Zariski open subsets of $C$ and by Lemma \ref{lem:ZtHt}, it is enough to show that each one of these separately is nonvoid. To see that condition (\ref{itm:c1}) is nonvoid we choose our basis in such a way that $([W])^{g_1} \not \subset H^{+}$ and then choose $x \in [W]$ to be any point such that $x^{g_1} \not \in H^{+}$. Since $\{g_2,\ldots, g_{k}\}$ are by assumption non scalar matrices we can choose $v_2$ which is not an eigenvector of any of these and then complete it into a basis in such a way that Condition (\ref{itm:c2}) is satisfied. Condition (\ref{itm:c3}) is addressed similarly. Finally by assumption $\dim([W]) = \dim(W)-1 \ge 1$, so that after we have chosen the basis $\{v_1,\ldots, v_n\}$ we can always choose $x \in [W]$ in such a way that Condition (\ref{itm:c4}) is satisfied.

We now argue, by induction on $i$ that $\lim_{L \arrow \infty} x_i(L) = a^{+}$ and that $\lim_{L \arrow \infty} y_i(L) = a^{-}$ for all $1 \le i < k$, and also for $i=k$ in the case where $h_k \ne 1$. Condition (\ref{itm:c1}) guarantees that $x^{g_1} \not \in H^{+}$, hence $\lim_{L \arrow \infty} x_1(L) = \lim_{L \arrow \infty} (x^{g_1})^{f(L)} = a^{+}$ by Equation (\ref{eqn:prx}). Separating into even and odd steps of the induction we obtain:
\begin{eqnarray*}
\lim_{L \arrow \infty} y_{i}(L) & = & \lim_{L \arrow \infty} (x_i(L))^ {h_i f^{-1}} = \lim_{L \arrow \infty} (a^{+})^{h_i f^{-1}} = a^{-} \\
\lim_{L \arrow \infty} x_{i+1}(L) & = & \lim_{L \arrow \infty} (y_i(L))^ {g_{i+1} f} = \lim_{L \arrow \infty} (a^{-})^{g_{i+1} f} = a^{+}
\end{eqnarray*}
Where, in each line, the first equality comes from the definition of $y_i(L), x_i(L)$, the second equality uses the induction hypothesis and the last follows from Equation (\ref{eqn:prx}) in view of conditions (\ref{itm:c3})  or (\ref{itm:c2}) respectively. This induction shows that 
$$\lim_{L \arrow \infty} x^{\Phi_f(w )} = 
   \left \{
   \begin{array}{ll}
   a^{-} & {\text{if $h_1 \ne 1$}}  \\
   \left(a^{-}\right)^{g_k} & {\text{if $h_1=1$}}  
   \end{array} \right. ,
$$
now (\ref{itm:c4}) is exactly what is needed to conclude the proof. 

 \end{proof}

\subsection{Proof of the main theorem} \label{sec:linearity}
In this section we prove Theorem \ref{thm:tec}. Throughout we assume that $G,A,t,v$ are as given in that theorem. The following proposition reduces the proof to two special cases. 
 \begin{prop} \label{prop:hyp}
It is enough to prove Theorem \ref{thm:tec} under the additional assumption that $v \not \in AtA$ and that either $v^{-1} \not \in AvA$ or $v$ is an involution.
\end{prop} 
\begin{proof} If $v = ata' \in AtA$ then one can take $A_1=A, G_1=G, f=a'$ and conclude. \cite{RST:s2t} mention also the condition $v^{-1} \not \in AtA$, but this is redundant because as $t$ is an involution it is equivalent to $v \not \in AtA$. Other than that reduction is identical to that carried out in \cite[Section 2]{RST:s2t}.  
\end{proof}
\noindent The next two theorems address the two cases singled out in the above proposition respectively. 
\begin{theorem}(Free product) \label{thm:free_prod} Let $A,G,t,v$ be as in Theorem \ref{thm:tec} and assume that $v^{-1} \not \in AvA$. Then there exists an element $\ell
\in \SL_n(\R)$ of infinite order such that the subgroup $G_1 = \langle G,\ell \rangle$ is isomorphic to the free product $G * \langle \ell \rangle$. Furthermore if we set $A_1 := \langle A, \ell, t \ell v^{-1} \rangle,$ then:
\begin{enumerate}
\item \label{itm:f1} $A_1$ contains no involutions,
\item \label{itm:f2} $A_1$ is malnormal in $G_1$, 
\item \label{itm:f3} $A_1t \ell = A_1 v$, 
\item \label{itm:f4} $G \cap A_1 = A$,
\item \label{itm:f5} all the involutions in $G_1$ are conjugate (in $G_1$),
\item \label{itm:f6} If $W^{-}(t)$ is contained in an eigenspace of $g \Pr_t$ for $g \in G_1$ then either $g =1$ or $g =t$.
\end{enumerate}
\end{theorem}
\begin{proof}
The existence of an element $\ell$ giving rise to an isomorphism $\langle G,\ell \rangle \cong G_1$ and  satisfying (\ref{itm:f6}) follows upon application of Proposition \ref{prop:free_prod} to the given group $G$ along with any infinite cyclic subgroup $H < \SL_n(\R)$ and the projection  $\pi = \pi_t$.  

The rest of the desired properties are just claims about the abstract group $G_1$. Properties (\ref{itm:f1}),(\ref{itm:f2}),(\ref{itm:f3}) and (\ref{itm:f4}) were established in \cite[Theorem 3.1]{RST:s2t}. The condition $v^{-1} \not \in AvA$ was crucial in that proof, but we will not use it again here. Finally let $\sigma \in G*\Z$ be any involution. Being an element of finite order it must stabilize a vertex in the Bass-Serre tree corresponding to the free product $G*\Z$. Since $\Z$ contains no involutions this means that $\sigma$ is conjugate into $G$. But by assumption all involutions in $G$ are already conjugate. This establishes (\ref{itm:f5}) and concludes the proof of the theorem.
\end{proof}
\begin{theorem}(HNN extension) \label{thm:HNN}
Let $A,G,t,v$ be as in Theorem \ref{thm:tec} and assume that $v$ is an involution. Then there exists an element $\ell \in \SL_n(\R)$ of infinite order such that the group $\langle G, \ell\rangle$ is isomorphic to the HNN extension product $G_1 := \langle G, \ell \ | \ \ell^{-1}t \ell=v \rangle$. Furthermore if we set $A_1:=\langle A, \ell \rangle$ then all the conclusions (\ref{itm:f1})-(\ref{itm:f6}) of the previous theorem hold.
\end{theorem}
\begin{proof}
Note first that properties (\ref{itm:f1}),(\ref{itm:f2}),(\ref{itm:f3}), (\ref{itm:f4}) and (\ref{itm:f5}) have nothing to do with the linear realization, they are purely algebraic statements about the groups $A_1 < G_1$. The first four were established in \cite[Theorem 4.1]{RST:s2t}. To show (\ref{itm:f5}) we argue on the Bass-Serre tree of the HNN extension $G_1 = \langle G, \ell \ | \ \ell^{-1}t \ell = v \rangle$. Any involution $\sigma \in G_1$, being an element of finite order, fixes a point in (the geometric realizatdion of) the tree. Since the action on the tree does not invert edges, $\sigma$ fixes a vertex and is thus conjugate into $G$ where, by assumption, all involutions are already conjugate.

In view of the assumption that all involutions in $G$ are already conjugate, we claim that the two HNN extensions $G_1=\langle G,\ell \ | \ \ell^{-1}t \ell=v\rangle$ and $G_2=\langle G,k \ | \ k^{-1}tk=t\rangle$ are isomorphic. Fix an element $h \in G$ such that $h^{-1}th = v$. Let $F: G_1 \arrow G_2$ be defined by the requirement that $F(g) = g, \forall g \in G, F(\ell) = k h$ and $I: G_2 \arrow G_1$ be  defined by $I(g)=g, \forall g \in G, I(k) = \ell h^{-1}$. Since $I \circ F$ fixes pointwise both $G$ and $\ell$ it must be the identity of $G_1$ and similarly $F \circ I$ is the identity of $G_2$. Now, in view of this isomorphism it would be enough to exhibit an element $k \in \SL_n(\R)$ such that the group $\langle G,k \rangle$ is isomorphic to the group  the group $\langle G,k \ | \ k^{-1}tk=t\rangle$ and such that Property (\ref{itm:f6}) is satisfied. The element $\ell$ in this case will be given by $\ell = kh$. 

Conjugating $G$ we may assume that $t = \diag\lr{-1,...,-1,1,...,1}$ is already a diagonal matrix.  Thus, if $e_1,...,e_n$ denote the vectors of the standard basis then $W^{-}:=W^{-}(t) = \langle e_1, e_2, \ldots,e_r \rangle,$ and $W^{+}:=W^{+}(t) =\langle e_{r+1},e_{r+2}, \ldots, e_n \rangle$ are, respectively, the $\mp 1$ eigenspaces of $t$. By assumption $r \ge 2$. 

Let $Z = \SL(W^{-}) \times \SL(W^{+}) \leq Z_{\SL_n(\R)}(t)$. This group is isomorphic to $\SL_{r}(\R) \times \SL_{n-r}(\R)$ and in particular it is a connected (and hence irreducible) closed subgroup of $\SL_n(\R)$. For any $u \in Z$ we obtain a homomorphism. $\Psi_u : G_2 \arrow \SL_n(\R)$ given by $g \mapsto g, \forall g \in G$ and $k \mapsto u$. 

Our goal is to find $u \in Z$ such that for every $w \in G_2, \ w \not \in \{1,t\}$ the matrix $\Psi_u(w) \Pr_t$ does not fix $[W^{-}]$ pointwise. This will show that $\Psi_u$ is an isomorphism onto its image, and establish (\ref{itm:f6}) at the same time. Thus concluding the proof of the theorem. 

A Baire category argument allows us to deal with each word $w \in G_1, \ w \not \in \{1,t\}$ separately. Let $\{w_0,w_1,\ldots, w_r\} \subset W^{-}$ be $r+1$ vectors in general position. By  Lemma \ref{lem:general_pos}
\begin{eqnarray*}
\cU_{w} & := & \left\{u \in Z \ | \  W^{-} {\text{ is not contained in an eigenspace of }} \Psi_u(w) \Pr_t \right\} \\
& = & Z \smallsetminus \left(\bigcap_{i=0}^r \left\{u \in Z \ | \ w_i ^{\Psi_u(w) \Pr_t} \wedge  w_i = 0 \right\} \right)
\end{eqnarray*} 
is the complement of a closed subvariety of $Z$. By Lemma \ref{lem:ZtHt} it is enough to show that each one of these sets is nonempty to deduce that their intersection $$\cU = \bigcap_{w \in G_1 \smallsetminus \{1,t\}} \cU_{w},$$ is a dense $G_{\delta}$ subset of $Z$. 

From here on fix $w \in G_1 \smallsetminus \{1,t\}$ and write this element in a reduced canonical form as $w=g_1 k^{\delta_1} g_2 \ldots k^{\delta_m}g_{m+1}$ where $g_i \in G, \delta_i \in \Z$. Let us also denote 
$$s_i = \sgn(\delta_i) = \left\{ \begin{array}{ll} + & {\text{ if }} \delta_i > 0 \\ - & {\text{ if }} \delta_i < 0 \end{array} \right.$$
Being reduced means that $g_i \ne 1$, for $i \in \{2,\ldots,m\}$, that $\delta_i \ne 0, \ \forall i$ and that whenever $g_{i+1}=t$ then $s_i = s_{i+1}$. In particular it is not important how $s_i$ is defined when $\delta_i=0$. We will conclude the proof once we find some $u \in Z$ and a projective point $x \in [W^{-}]$ such that 
\begin{equation} \label{eqn:goal} 
x^{\Psi_u(w) \Pr_t} = x^{g_1 u^{\delta_1} g_2 \ldots u^{\delta_m}g_{m+1} \Pr_t} \ne x.
\end{equation} 

We define our element $u$ using the the following data: (i) A basis $\{v_1,v_2, \ldots, v_n\}$ for $\R^n$, (ii) a number $L > 0$ and (iii) a point $x \in [W^{-}]$. We always require the basis to be compatible with the direct sum decomposition $\R^n = W^{+} \oplus W^{-}$ in the sense that $W^{-} = \Span \{v_1,\ldots, v_r\}$ and $W^{+} = \Span \{v_{r+1}, \ldots, v_n\}$. With our basis we associate two projective points and hyperplanes as follows: $a^{+} = [v_1], a^{-} = [v_2] \in \P$, $H^{+} = [\Span \{v_2,v_3,\ldots, v_n\}]$, $H^{-} = [\Span\{v_1,v_3,\ldots, v_n\}] < \P$. By our assumption $r\ge2$ so that $x,a^{\pm} \in [W^{-}]$.  Using all these we define $u = u(L) \in Z$ by:
$$v_1^{u} = L v_1, \qquad v_2^{u} = \frac{1}{L} v_2, \qquad v_i^{u} = v_i , \ \ \ \forall 3 \le i \le n.$$
The dynamics of the element $u$ on the projective plane $\P$ is very proximal in the sense that 
\begin{equation} \label{eqn:prox}
\lim_{L \arrow \infty} y^{u(L)^{\pm 1}} = a^{\pm}, \qquad \forall y \not \in H^{\pm} {\text{ respectively}}.
\end{equation}

Let $S = \{g_1,g_1^{-1}, g_2,g_2^{-1}, \ldots, g_{m+1}^{-1}\}$ and set $S_0 = S \smallsetminus \{1,t\}$. With Equation (\ref{eqn:goal}) in mind we impose the following requirements on the datum used to define $u$.
\begin{enumerate}[(a)]
\item \label{itm:x1} $(a^{\delta})^{h} \not \in H^{\eta}, \qquad \forall h \in S_0, \delta, \eta \in \{\pm\}.$
\item \label{itm:x2} $x^{h \Pr_t} \ne x, \ \forall h \in S_0$
\item \label{itm:x3} $x^{g_{1}} \not \in H^{+} \cup H^{-}$
\item \label{itm:x4} $(a^{\pm})^{g_{m+1} \Pr_t} \ne x$
\end{enumerate}

We first argue that it is possible to satisfy all the conditions (\ref{itm:x1})--(\ref{itm:x4}). We are looking for a basis compatible with the direct sum decomposition $W^{+} \oplus W^{-}$ together with one additional projective point. Within the variety $\GL_r(\R) \times \GL_{n-r}(\R) \times \P$ of possible choices, we seek a solution inside the {\it{connected}} subvariety $C = \SL_r(\R) \times \SL_{n-r}(\R) \times \P$. Since all the requirements (\ref{itm:x1})-(\ref{itm:x4}) are Zariski open it is enough to show that each one separately is nonvoid to deduce that they are simultaneously realized on a dense open subset of $C$. 

The $4|S_0|$ requirements expressed in (\ref{itm:x1}) come in two different flavors: $(a^{\pm})^h \not \in H^{\pm}$ and $(a^{\pm})^h \not \in H^{\mp}$. The first  type is trivial and left to the reader. As an example for the second one consider $(a^{+})^h \not \in H^{-}$. By the definition of $a^{+},H^{-}$ this is equivalent to $v_1^{h \Pr_t} \not \in \Span\{v_1,v_3,\ldots, v_r\}$ which is satisfied by $(\{v_1, v_2 = v_1^{h \Pr_t}, v_3, \ldots, v_n\},x) \in C$ whenever $v_1 \in W^{-}$ is not an eigenvector of $h \Pr_t$. There are  such vectors by the assumption that $W^{-}$ is not contained in an eigenspace of $h \Pr_t$ for $h \in S_0$. Similarly the conditions $x^{h \Pr_t} \ne x$, appearing in (\ref{itm:x2}) are satisfied by choosing $x = [v]$ where $v \in W^{-}$ is not an eigenvector of $h \Pr_t$.  If $g_1 \in \{1,t\}$ then (\ref{itm:x3}) is equivalent to $x \not \in H^{-} \cup H^{+}$, which is possible because $W^{-} \not \subset H^{+} \cup H^{-}$. If $g_1 \in S_0$ we appeal to (\ref{itm:x1}) showing that (\ref{itm:x3}) is satisfied by the choice $x= a^{+}$. Finally as $\dim([W^{-}]) \ge 1$ this set is infinite, so one can choose $x \in [W^{-}]$ avoiding the two points $(a^{\pm})^{g_{m+1} \Pr_t}$, thus satisfying (\ref{itm:x4}). 

Having constructed $u$ we proceed to prove Equation (\ref{eqn:goal}). If $m = 0$ then $g = g_1 = g_{m+1} \not \in \{1,t\}$  and  Equation (\ref{eqn:goal}) follows directly form (\ref{itm:x2}). Otherwise we define recursively a sequence of projective points $x_0 = x$ and $x_{i+1}= x_{i+1}(L) = x_i^{g_{i+1}u^{\delta_{i+1}}}$ and argue by induction on $j$ that 
$$\lim_{L \arrow \infty} x_j(L) = a^{s_j}, \quad \forall 1 \le j \le m.$$ 
For $j = 1$ we are looking to prove that
$\lim_{L \arrow \infty} x_1 = \lim_{L \arrow \infty} \left(x^{g_1}\right)^{u^{\delta_1}} = a^{s_1},$ which is true by Equation (\ref{eqn:prox}) in view of (\ref{itm:x3}). For $j+1$ the induction hypothesis yields
$$\lim_{L \arrow \infty} x_{j+1} = \lim_{L \arrow \infty} (x_j)^{g_{j+1} u^{\delta_{j+1}}} = \lim_{L \arrow \infty} \left( (a^{s_j})^{g_{j+1}} \right)^{u^{\delta_{j+1}}}.$$
If $g_{j+1} = t$ then $s_{j+1} = s_{j}$, because $w$ is a reduced word so $(a^{s_j})^{g_{j+1}} = (a^{s_j})^{t} = a^{s_j} \not \in H^{s_j} = H^{s_{j+1}}$. Otherwise $g_{j+1} \in S_0$ and $(a^{s_j})^{g_{j+1}} \not \in H^{s_{j+1}} $ by (\ref{itm:x1}). In both cases we obtain $\lim_{L \arrow \infty} x_{j+1} = a^{s_{j+1}}$ using Equation (\ref{eqn:prox}). This completes the induction. Our goal, namely Equation (\ref{eqn:goal}), now follows as 
$$\lim_{L \arrow \infty} x^{\Psi_u(w) \Pr_t} = (a^{s_m})^{g_{m+1} \Pr_t} \ne x,$$
where the last inequality is (\ref{itm:x4}). The existence, and in fact genericity, of elements $u \in \SL_r(\R) \times \SL_{n-r}(\R)$ for which $\Psi_u$ is injective and satisfies requirement (\ref{itm:f6}) follows. This concludes the proof of the theorem. 
\end{proof}
\begin{proof}[Proof of Theorem \ref{thm:tec}]
In view of Proposition \ref{prop:hyp}, Theorem \ref{thm:tec} now follows from Theorem \ref{thm:free_prod} and Theorem \ref{thm:HNN}. Corollary \ref{cor:tec} also directly follows. 
\end{proof}
\noindent Before turning to the proof of Theorem \ref{thm:s2t} we require the following.
\begin{lemma} \label{lem:diagonal}
Let $X \curvearrowleft G$ be a faithful sharply $2$-transitive action and $M,N \lhd G$ two normal subgroups such that $[M,N] := \langle \{[m,n] \  | \ m \in M, n \in N\} \rangle = \trivgp$. Then either $M = N$ is an abelian normal subgroup, or one of these groups is trivial. 
\end{lemma}
\begin{proof}
Assume that both groups are nontrivial. Since the action is primitive faithful both of them are transitive on $X$. Moreover they act freely. Indeed fix $x \in X$ then $N_{x^m} = m^{-1}N_x m = N_x$, for all $m\in M$, and as $M$ is transitive $N_x$ fixes $X$ pointwise, so it is trivial. Thus $G = B \ltimes N$, where $B = G_x$. We claim that $M \leq N$. Indeed assume $e \ne m \in M$ and let us decompose it according to the above semidirect product $m = bn$. So $b = mn^{-1}$ must commute with $m$. But this means that $b$ fixes two different points $x,x^m \in X$. Since the action is sharply $2$-transitive this means that $b=e$ and $m \in N$. Thus $M \leq N$ and by symmetry the two groups are equal. 
\end{proof} 
\begin{proof}[Proof of Theorem \ref{thm:s2t}]
Let $X \curvearrowleft L$ be the given action, fix any basepoint $x \in X$ and let $A = L_x$ be its stabilizer. The condition that the action on (ordered) pairs of distinct points is free is equivalent  to $A$ being malnormal in H. Since $n \ge 3$ we can find an involution $t \in \SL_n(\R)$, with eigenspaces $W^{\pm}$, such that $\dim(W^{-}) \ge 2$. Indeed, since we are working inside $\SL_n(\R)$, $\dim(W^{-})$ must be even. Let $\Pr = \R^n \arrow W^{-}$ be the projection on $W^{-}$ along $W^{+}$. Applying Proposition \ref{prop:free_prod} to the groups $L$ and $\langle t \rangle$ and then replacing if necessary the given group $L$ by its conjugate obtained in that Proposition we may assume that $G := \langle L, t \rangle \cong L * (\Z/2\Z)$ and that furthermore: (i) $A$ is malnormal in $G$, (ii) all involutions in $G$ are conjugate to $t$ and (iii) if $W^{-}$ is contained in an eigenspace of some $g \in G$ then $g \in \{1,t\}$. Indeed (i),(ii) are guaranteed by the properties of free products and (iii) follows from Proposition \ref{prop:free_prod}.

The conditions (i),(ii),(iii) are exactly these needed in order to apply Corollary \ref{cor:tec} to the groups $A<G$. This yields an embedding of the given action $$(X \curvearrowleft L) \embedding (A \bs G \curvearrowleft G) \embedding (B \bs H \curvearrowleft H).$$
into a sharply $2$-transitive permutation group $H < \SL_n(\R)$ with $\pchar(H) = 2$. This group is not split. Indeed since $\pchar(H) = 2$, if this group were to split the normal abelian subgroup, as the additive group of a near field of characteristic $2$, would be an infinite elementary abelian $2$-group. And this is impossible inside $\SL_n(\R)$. 

Finally we appeal to the main theorem of \cite{GG:AOS} (see also \cite{GG:Primitive}) to deduce that $H$ admits uncountably many nonequivalent primitive permutation representations. According to the main theorem of that paper all we have to verify that $H$ is neither of Affine nor of diagonal type. Being affine is equivalent to the existence of a nontrivial abelian normal subgroup, which we just ruled out.  Being diagonal is equivalent to the existence of two nontrivial commuting normal subgroups which is ruled out by Lemma \ref{lem:diagonal}. This completes the proof of the theorem. 
\end{proof}		

\begin{remark}
The fact that the group we construct contains neither an abelian normal subgroup, nor two commuting normal subgroups can also be seen directly from the construction. This was our original proof. We are grateful to the referee who provided the argument in it's current form, complete with the proof of Lemma \ref{lem:diagonal}, which we find more pleasing. 
\end{remark}

\bibliographystyle{alpha}
\bibliography{../tex_utils/yair}

\noindent {\sc Yair Glasner.} Department of Mathematics.
Ben-Gurion University of the Negev.
P.O.B. 653,
Be'er Sheva 84105,
Israel.
{\tt yairgl\@@math.bgu.ac.il}\bigskip

\noindent {\sc Dennis D. Gulko.} Department of Mathematics.
Ben-Gurion University of the Negev.
P.O.B. 653,
Be'er Sheva 84105,
Israel.
{\tt gulko\@@math.bgu.ac.il}\bigskip

\end{document}